\title[Compactness in abelian groups]{Compactness in abelian group theory}
\author[F.~Calderoni]{Filippo Calderoni}
\address{Department of Mathematics, Rutgers University, 
Hill Center for the Mathematical Sciences,
110 Frelinghuysen Rd.,
Piscataway, NJ 08854-8019}
\email{filippo.calderoni@rutgers.edu}
\author[A.~Ostrem]{Ava Ostrem}
\address{Department of Mathematics, Rutgers University, 
Hill Center for the Mathematical Sciences,
110 Frelinghuysen Rd.,
Piscataway, NJ 08854-8019}
\email{amo160@scarletmail.rutgers.edu}
\date{\today} 
\subjclass[2020]{Primary: 03E55, 20K25.}
\thanks{Both authors were supported by the NSF Grant DMS -- 2348819.}
\keywords{large cardinals, weakly compact cardinals, free abelian groups, \(\Sigma\)-cyclic groups}
\newenvironment{enumerate-(a)}{\begin{enumerate}[label={\upshape (\alph*)}, leftmargin=2pc]}{\end{enumerate}}
\newenvironment{enumerate-(a)-r}{\begin{enumerate}[label={\upshape (\alph*)}, leftmargin=2pc,resume]}{\end{enumerate}}
\newenvironment{enumerate-(a)-5}{\begin{enumerate}[label={\upshape (\alph*)}, leftmargin=2pc,start=5]}{\end{enumerate}}
\newenvironment{enumerate-(A)}{\begin{enumerate}[label={\upshape (\Alph*)}, leftmargin=2pc]}{\end{enumerate}}
\newenvironment{enumerate-(A)-r}{\begin{enumerate}[label={\upshape (\Alph*)}, leftmargin=2pc,resume]}{\end{enumerate}}
\newenvironment{enumerate-(i)}{\begin{enumerate}[label={\upshape (\roman*)}, leftmargin=2pc]}{\end{enumerate}}
\newenvironment{enumerate-(i)-r}{\begin{enumerate}[label={\upshape (\roman*)}, leftmargin=2pc,resume]}{\end{enumerate}}
\newenvironment{enumerate-(I)}{\begin{enumerate}[label={\upshape (\Roman*)}, leftmargin=2pc]}{\end{enumerate}}
\newenvironment{enumerate-(I)-r}{\begin{enumerate}[label={\upshape (\Roman*)}, leftmargin=2pc,resume]}{\end{enumerate}}
\newenvironment{enumerate-(1)}{\begin{enumerate}[label={\upshape (\arabic*)}, leftmargin=2pc]}{\end{enumerate}}
\newenvironment{enumerate-(1)-r}{\begin{enumerate}[label={\upshape (\arabic*)}, leftmargin=2pc,resume]}{\end{enumerate}}
\newtheorem{theorem}{Theorem}[section]
\newtheorem{lemma}[theorem]{Lemma}
\newtheorem{proposition}[theorem]{Proposition}
\newtheorem{fact}[theorem]{Fact}
\newtheorem{claim}{Claim}[theorem]
\theoremstyle{definition}
\newtheorem{definition}[theorem]{Definition}
\theoremstyle{remark}
\newtheorem{remark}[theorem]{Remark}
\numberwithin{equation}{section}
\DeclareMathOperator{\Reg}{Reg}
\begin{document}

\begin{abstract}
In this paper we discuss large cardinals and compactness theorems in abelian group theory.
More specifically, we generalize two classical compactness results for free abelian groups to the broader context of direct sums of cyclic groups.
\end{abstract}

\maketitle

\section{Introduction}

Large cardinal axioms postulate the existence of combinatorial properties of infinity. These axioms are far beyond the usual axioms of mathematics and expand the burden of the mathematical universe. A well-studied consequence of large cardinal axioms is compactness, the extent to which mathematical structures are determined by their local behavior.

The program of exploring the interactions between large cardinals and compactness dates back to as early as the influential work of Magidor~\cite{Mag71} and Shelah~\cite{She75} in the '70s. Since then, it has been a trend in set theory. Moreover, compactness and other set-theoretical methods have been proven useful in the subjects of infinite group theory and almost-free modules~\cite{BagMag,Cor,CorPov,CPT, MagShe, KanKos}.

In this manuscript, we focus on two classical compactness theorems for free abelian groups:

\begin{enumerate-(1)}
\item 
If \(\kappa\) is \(\omega_1\)-strongly compact, then every \(\kappa\)-free abelian group is free.
\item 
If \(\kappa\) is weakly compact, then every \(\kappa\)-free abelian group of cardinality \(\kappa\) is free.
\end{enumerate-(1)}

Generalizations of these two theorems to the context of free modules are already discussed in the comprehensive monograph of Eklof and Mekler~\cite{EklMek}. In this paper, instead, we discuss another generalization of free groups: the direct sums of cyclic groups. Although these groups have been already explored by Fuchs and Kaplansky in their famous books \cite{Fuc58} and \cite{Kap54}, it was only recently that they were referred to as \emph{\(\Sigma\)-cyclic groups} in~\cite{Fuc15}. Free abelian groups and \(\Sigma\)-cyclic groups present many similarities. However, compactness theorems for free groups may not generalize to the \(\Sigma\)-cyclic context, since the class of \(\Sigma\)-cyclic groups is strictly larger.

In this paper we prove the following two compactness theorems:

\begin{theorem}
\label{thm : comp Sigma cyclic}
    If \(\kappa\) is \(\omega_1\)-strongly compact, then every \(\kappa\)-\(\Sigma\)-cyclic abelian group is \(\Sigma\)-cyclic.
\end{theorem}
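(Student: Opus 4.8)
The plan is to run the usual elementary-embedding argument for compactness, adapted to the \(\Sigma\)-cyclic setting. The one genuinely algebraic ingredient I would isolate first is the fact that \emph{every subgroup of a \(\Sigma\)-cyclic group is \(\Sigma\)-cyclic}: for \(p\)-groups this is Kulikov's theorem \cite{Fuc58}, and the general case follows by splitting off the torsion subgroup, whose torsion-free quotient embeds into a free group and is therefore free. This replaces the cleaner principle ``subgroups of free groups are free'' that drives the free-group version, and it is precisely the point where the \(\Sigma\)-cyclic case could in principle break down.

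Assume \(\kappa\) is \(\omega_1\)-strongly compact and let \(G\) be \(\kappa\)-\(\Sigma\)-cyclic, i.e.\ every subgroup of \(G\) of cardinality \({<}\kappa\) is \(\Sigma\)-cyclic. If \(|G|<\kappa\) then \(G\) itself is such a subgroup and we are done, so I would assume \(\lambda := |G| \ge \kappa\) and take the underlying set of \(G\) to be the ordinal \(\lambda\). Using the embedding characterization of \(\omega_1\)-strong compactness, fix an elementary embedding \(j\colon V \to M\) with \(\mathrm{crit}(j)\ge \omega_1\) together with a set \(A \in M\) such that \(j''\lambda \subseteq A\) and \(|A|^M < j(\kappa)\). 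By elementarity \(M\) believes that \(j(G)\) is \(j(\kappa)\)-\(\Sigma\)-cyclic. Working inside \(M\), set \(H = \langle A \cap j(G)\rangle\), the subgroup of \(j(G)\) generated by \(A \cap j(G)\). Then \(H \in M\), we have \(j''G \subseteq H\) because \(j''G \subseteq j''\lambda \subseteq A\), and \(|H|^M \le \max(|A|^M,\aleph_0) < j(\kappa)\); hence \(M\) thinks \(H\) is \(\Sigma\)-cyclic.

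It remains to transfer this back to \(V\) and descend to \(G\). A decomposition \(H = \bigoplus_{i\in I} C_i\) into cyclic subgroups witnessing \(\Sigma\)-cyclicity lives in \(M\), and the assertion that a given family \((C_i)_{i\in I}\) is a direct-sum decomposition into cyclic subgroups quantifies only over finite sums of group elements, so it is absolute between \(M\) and \(V\); thus \(H\) is \(\Sigma\)-cyclic in \(V\) as well. Since \(j\restriction G\) is an isomorphism of \(G\) onto the subgroup \(j''G \le H\), the preliminary fact applied in \(V\) shows that \(j''G\), and hence \(G\), is \(\Sigma\)-cyclic. I expect the main obstacle to be the algebraic ingredient above rather than the set theory: the embedding argument is routine once the covering property \(j''\lambda \subseteq A\) with \(|A|^M<j(\kappa)\) is available, and the only subtle points are that \(j''G\) itself need not belong to \(M\) — which is why one passes through the \(M\)-subgroup \(H\) — and the (easy) absoluteness of the direct-sum decomposition.
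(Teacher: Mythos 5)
Your argument is correct, but it takes a genuinely different route from the paper's. The paper works directly with the ultrafilter formulation of \(\omega_1\)-strong compactness: it extends the \(\kappa\)-complete filter generated by the sets \(U_Y=\{X\in\mathcal{P}_\kappa(A): Y\subseteq X\}\) to an \(\omega_1\)-complete ultrafilter \(\mathcal{U}\), forms the ultraproduct \(\prod_{Y}\langle Y\rangle/\mathcal{U}\) of the small (hence \(\Sigma\)-cyclic) subgroups, and proves a dedicated combinatorial lemma (Lemma~\ref{lem:ultraproduct-cyclic-groups}) that an ultraproduct of \(\Sigma\)-cyclic groups by an \(\omega_1\)-complete ultrafilter is again \(\Sigma\)-cyclic --- countable completeness is used there to fix a single ``shape'' \(k_1,\dotsc,k_n\) of linear combination on a measure-one set; the group then embeds diagonally and Kulikov's theorem finishes. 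You instead invoke the elementary-embedding characterization of \(\omega_1\)-strong compactness (uncountable critical point plus a covering set \(A\in M\) with \(j''\lambda\subseteq A\) and \(|A|^M<j(\kappa)\)); this equivalence is due to Bagaria and Magidor~\cite{BagMag}, and the direction you need follows from the paper's definition by taking the well-founded ultrapower by a countably complete fine ultrafilter on \(\mathcal{P}_\kappa(\lambda)\), so it is legitimately available. Your route trades the paper's ultraproduct lemma for elementarity plus the (correctly identified, and genuinely easy) upward absoluteness of ``\((C_i)_{i\in I}\) is a direct-sum decomposition into cyclic subgroups'' between the transitive \(M\) and \(V\); both proofs end identically, applying Kulikov's theorem to \(G\cong j''G\leq H\) (respectively to \(A\hookrightarrow\mathcal{G}\)). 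Your version is shorter once the embedding characterization is granted, while the paper's is self-contained from its stated definition; in a write-up you should cite the characterization explicitly and note that \(M\) is transitive, since that is what makes the absoluteness step meaningful.
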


\begin{theorem}
\label{thm : weak comp Sigma cyclic}
     If \(\kappa\) is weakly compact, then every \(\kappa\)-\(\Sigma\)-free abelian group of cardinality \(\kappa\) is \(\Sigma\)-cyclic.
\end{theorem}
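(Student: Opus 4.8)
The plan is to mimic the classical proof of the free case~(2) using the elementary-embedding characterization of weak compactness, with \(\Sigma\)-cyclicity playing the role of freeness. Since a weakly compact cardinal is inaccessible, \(\kappa\) is regular and \(\kappa^{<\kappa}=\kappa\). First I would fix a group \(G\) that is \(\kappa\)-\(\Sigma\)-free with \(|G|=\kappa\), code its underlying set as \(\kappa\) so that its elements are ordinals below \(\kappa\), and choose a continuous filtration \(G=\bigcup_{\alpha<\kappa}G_\alpha\) into subgroups \(G_\alpha\) with \(|G_\alpha|<\kappa\); by hypothesis every such \(G_\alpha\) is \(\Sigma\)-cyclic. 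I would then select a \(\kappa\)-model \(M\), that is, a transitive model of a large enough fragment of \(\mathsf{ZFC}\) with \(|M|=\kappa\), \({}^{<\kappa}M\subseteq M\), and \(\kappa,G,\langle G_\alpha:\alpha<\kappa\rangle\in M\); such an \(M\) is obtained as the transitive collapse of a suitable elementary submodel of \(H_{\kappa^+}\), and because \({}^{<\kappa}M\subseteq M\) it correctly computes that \(G\) is \(\kappa\)-\(\Sigma\)-free. Weak compactness then yields a transitive \(N\) and an elementary embedding \(j\colon M\to N\) with \(\mathrm{crit}(j)=\kappa\).

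The heart of the argument is to locate \(G\) inside \(j(G)\). By elementarity, \(N\) believes that \(j(G)\) is \(j(\kappa)\)-\(\Sigma\)-free and that \(j(\langle G_\alpha\rangle)=\langle (jG)_\alpha:\alpha<j(\kappa)\rangle\) is a continuous filtration of \(j(G)\) into subgroups of size \(<j(\kappa)\). Since \(\mathrm{crit}(j)=\kappa\), the embedding fixes every ordinal below \(\kappa\) and hence fixes each \(G_\alpha\), so \((jG)_\alpha=G_\alpha\) for \(\alpha<\kappa\); by continuity at the limit \(\kappa\) this gives \((jG)_\kappa=\bigcup_{\alpha<\kappa}G_\alpha=G\). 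Thus in \(N\) the group \(G\) is a subgroup of \(j(G)\) of size \(\kappa<j(\kappa)\), and \(j(\kappa)\)-\(\Sigma\)-freeness forces \(N\) to satisfy that \(G\) is \(\Sigma\)-cyclic. Finally I would transfer this downward: a witnessing decomposition \(G=\bigoplus_i C_i\) into cyclic summands is an element of the transitive model \(N\), and both ``each \(C_i\) is cyclic'' and ``the \(C_i\) form an internal direct sum equal to \(G\)'' are absolute between \(N\) and \(V\); hence \(G\) is genuinely \(\Sigma\)-cyclic.

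The group-theoretic input I would isolate is Kulikov's theorem that subgroups of \(\Sigma\)-cyclic groups are again \(\Sigma\)-cyclic, which guarantees that the filtration terms \(G_\alpha\) behave well and that the notion of \(\kappa\)-\(\Sigma\)-freeness propagates through \(j\) exactly as \(\kappa\)-freeness does in the free case. I expect the main obstacle to be bookkeeping at two points rather than any single deep step: first, verifying that the chosen \(\kappa\)-model \(M\) really reflects \(\kappa\)-\(\Sigma\)-freeness, which is where \({}^{<\kappa}M\subseteq M\) and \(\kappa^{<\kappa}=\kappa\) are essential so that all \(<\kappa\)-sized subgroups and their decompositions are captured; and second, pinning down the absoluteness of ``\(\Sigma\)-cyclic'' between \(N\) and \(V\) — unlike freeness, a \(\Sigma\)-cyclic decomposition may mix infinite-order and prime-power-order summands, so I would phrase the witnessing statement carefully as a first-order property of the decomposition map to ensure it passes between transitive models. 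An alternative route, avoiding embeddings, would replace weak compactness by \(\Pi^1_1\)-indescribability and argue that the obstruction set in an appropriate \(\Gamma\)-type invariant is nonstationary; I would keep this in reserve, but the embedding argument seems both shorter and more faithful to the free-group template.
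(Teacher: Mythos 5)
Your argument is correct, but it takes a genuinely different route from the one in the paper. You use the elementary-embedding (\(\kappa\)-model) characterization of weak compactness: reflect ``\(G\) is \(\kappa\)-\(\Sigma\)-cyclic'' into a \({<}\kappa\)-closed transitive \(M\), push it up to \(N\) via \(j\), identify \(G\) with the stage \((jG)_\kappa\) of the image filtration, read off \(\Sigma\)-cyclicity of \(G\) from the \(j(\kappa)\)-\(\Sigma\)-cyclicity of \(j(G)\) in \(N\), and finish by upward absoluteness of the witnessing decomposition. The delicate points you flag do go through: \({}^{<\kappa}M\subseteq M\) makes \(M\) compute \(\kappa\)-\(\Sigma\)-cyclicity correctly (a decomposition of a \({<}\kappa\)-sized subgroup is a \({<}\kappa\)-sized subset of \(\kappa\), hence lies in \(M\)), and ``\(G=\bigoplus_i\langle c_i\rangle\)'' is a first-order property of the family \(\{c_i\}\) over the group structure, hence absolute between transitive models. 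The paper instead uses only the stationary-reflection consequence of weak compactness (Lemma~\ref{lem:reflection}) together with the \(\Gamma\)-invariant characterization of \(\Sigma\)-cyclic groups (Theorem~\ref{thm:Gamma invariant groups}); that characterization is where the real group theory lives --- purity, Szele's purification theorem, and Kaplansky's theorem that a pure subgroup with \(\Sigma\)-cyclic quotient is a direct summand. Your route bypasses all of that machinery (indeed you do not even need Kulikov's theorem here: \(G\) sits inside \(j(G)\) as a subgroup of size \({<}j(\kappa)\), so \(j(\kappa)\)-\(\Sigma\)-cyclicity applies directly), at the cost of invoking the full model-theoretic strength of weak compactness; the paper's argument isolates exactly which combinatorial consequence (simultaneous reflection of \(\kappa\) many stationary sets at a regular \(\lambda<\kappa\)) is needed and develops the \(\Gamma\)-invariant, which is of independent interest. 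Your ``reserve'' alternative via indescribability and a \(\Gamma\)-type invariant is essentially the paper's proof.
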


Our methods are the same as the ones employed in~\cite{EklMek}. Theorem~\ref{thm : comp Sigma cyclic} is proved using ultraproducts, while Theorem~\ref{thm : weak comp Sigma cyclic} follows from stationary reflection and a characterization of \(\Sigma\)-cyclic groups in terms of filtrations (Theorem~\ref{thm:Gamma invariant groups}).
Moreover, Theorem~\ref{thm : weak comp Sigma cyclic} requires a subtler analysis about purity and direct sums.

\subsection*{Acknowledgments}
We are grateful to Samuel Corson for pointing out inaccuracies in a previous version and for helpful remarks that improved the exposition of this paper. We also thank Tom Benhamou, Sean Cox and Alejandro Poveda for useful comments. The second author participated in the 2025 REU Program coordinated by the 
Center for Discrete Mathematics and Theoretical Computer Science (DIMACS) in 2025. We thank DIMACS for providing resources and a stimulating environment to conduct this research.

\section{Background}
\label{sec : background}

Let \(A\) be an abelian group and \(\{B_i:i\in I\}\) be a family of subgroups of \(A\). We define their \emph{sum} \(\sum_{i\in I} B_i\) as the smallest subgroup of \(A\) containing them. The group \(\sum_{i\in I} B_i\) can be explicitly described as the set of all finite sums of elements from the subgroups \(B_i\).

Further, we say that \(A\) is the \emph{direct sum} of its subgroups \(B_i\), with \(i\in I\) if:
\begin{enumerate-(i)}
\item 
\(A = \sum_{i\in I} B_i\);
\item 
For all \(i \in I\), we have \(B_i\cap \sum_{j\neq i} B_j =\{0\}\).
\end{enumerate-(i)}
In which case, we write \[A = \bigoplus_{i\in I} B_i\] or 
\(A = B_1 \oplus \dotsb \oplus B_n\),
whenever \(I=\{1,\dotsc, n\}\).

An abelian group \(A\) is \emph{free} if it is the direct sum of infinite cyclic groups. If these cyclic groups are generated by \(x_i\), \(i\in I\), we have 
    \[
    A = \bigoplus_{i\in I}\langle x_i\rangle.
    \]
    The set \(\{x_i: i\in I\}\) is a \emph{basis} of \(A\). The elements of \(A\) are linear combinations
    \[
    a = n_1x_1+\dotsb + n_k x_k \qquad k\geq 0, n_i\in\mathbb{Z}
    \]
    with different \(x_i\) and \(n_i\neq 0\).

A larger class of abelian groups are the $\Sigma$-cyclic groups. A group is \emph{$\Sigma$-cyclic} if it is the direct sum of cyclic groups. Notice that free groups coincide with torsion-free $\Sigma$-cyclic groups; however, a $\Sigma$-cyclic group can have torsion elements. 

It is known that subgroups of free groups are free. In a similar fashion, we have the following theorem due to Kulikov~\cite{Kul45}.

\begin{theorem}[Kulikov, 1945]
\label{thm:Kulikov}
  Subgroups of \(\Sigma\)-cyclic groups are 
    \(\Sigma\)-cyclic.
    \end{theorem}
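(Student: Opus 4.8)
The plan is to reduce to the case of abelian \(p\)-groups and then invoke \emph{Kulikov's criterion}: a \(p\)-group \(A\) is \(\Sigma\)-cyclic if and only if \(A\) is the union of an ascending chain of subgroups \(A_1 \subseteq A_2 \subseteq \cdots\) such that, for every \(n\), the heights computed in \(A\) of the nonzero elements of \(A_n\) are bounded by some integer \(k_n\). Granting this criterion, the theorem follows from one elementary observation about how heights behave under passage to a subgroup.

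First I would carry out two routine reductions. Let \(G\) be \(\Sigma\)-cyclic and \(H \leq G\). Writing \(G\) as a direct sum of cyclic groups and collecting the infinite cyclic summands and the finite ones separately, the torsion subgroup \(T = T(G)\) is itself \(\Sigma\)-cyclic and \(G/T\) is free. Since \(T(H) = H \cap T\) and \(H/T(H)\) embeds into the free group \(G/T\), the quotient \(H/T(H)\) is free, hence projective; thus the sequence \(0 \to T(H) \to H \to H/T(H) \to 0\) splits and \(H \cong T(H) \oplus (H/T(H))\). As \(H/T(H)\) is free, it suffices to prove that \(T(H)\) is \(\Sigma\)-cyclic, so I may assume \(G\) is torsion. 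Decomposing a torsion group into its primary components \(G = \bigoplus_p G_p\) and noting \(H = \bigoplus_p (H \cap G_p)\), I may further assume that \(G\) is a direct sum of cyclic \(p\)-groups for a fixed prime \(p\).

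For the main step, fix a decomposition \(G = \bigoplus_i C_i\) into cyclic \(p\)-groups and set \(A_n = \bigoplus_{|C_i| \leq p^n} C_i\). Then \(G = \bigcup_n A_n\), and a nonzero element of \(A_n\) has height in \(G\) at most \(n-1\), since its height equals the minimum of the heights of its nonzero coordinates, each of which lies in a direct summand of order at most \(p^n\). This is exactly the chain witnessing Kulikov's criterion for \(G\). Now put \(H_n = H \cap A_n\), so that \(H = \bigcup_n H_n\) is an ascending chain. The key point is height monotonicity: because \(p^k H \subseteq p^k G\) for all \(k\), the height of any \(x \in H\) computed in \(H\) is at most its height computed in \(G\). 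Hence for nonzero \(x \in H_n \subseteq A_n\) the height of \(x\) in \(H\) is at most \(n-1\), so the chain \((H_n)\) satisfies the hypotheses of Kulikov's criterion inside \(H\), and therefore \(H\) is \(\Sigma\)-cyclic.

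The main obstacle is Kulikov's criterion itself, and more precisely its sufficiency direction, which is where all the work lies. Given a chain with bounded heights, one must manufacture an actual direct decomposition into cyclic groups, and this requires a delicate inductive construction of a basis: at each height level one selects elements whose cosets form a basis of an appropriate quotient of socles, lifts them compatibly, and then verifies independence and spanning. Once this structural theorem is in hand, the reductions and the height comparison above are entirely routine, so I would isolate Kulikov's criterion as a separate lemma (or cite it) and present the remainder as the short argument sketched here.
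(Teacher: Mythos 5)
Your proposal is correct: the paper itself gives no proof of this theorem and simply defers to the literature (Fuchs), and your argument --- splitting off the torsion-free part via freeness of subgroups of free abelian groups, reducing to a fixed prime by primary decomposition, and then applying Kulikov's criterion together with the observation that heights can only decrease when passing to a subgroup --- is precisely the classical proof found there. The only substantive content you leave unproved is the sufficiency direction of Kulikov's criterion, which you correctly identify and isolate as a citable lemma.
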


An abelian group $A$ is \emph{$\kappa$-free} if all subgroups of $G$ with cardinality less than $\kappa$ are free. Similarly, an abelian group $A$ is \emph{$\kappa$-$\Sigma$-cyclic} if all subgroups of $G$ with cardinality less than $\kappa$ are $\Sigma$-cyclic.

Not all abelian groups are \(\Sigma\)-cyclic. For any prime \(p\), the Pr\"ufer $p$-group $\mathbb{Z}(p^{\infty})$ provides an interesting example. Recall that we can construct the countable infinite group $\mathbb{Z}(p^{\infty})$ as $\mathbb{Z}[\frac{1}{p},\frac{1}{p^2},\frac{1}{p^3},\dotsc]/\mathbb{Z}$. 
Any proper subgroup of $\mathbb{Z}(p^{\infty})$ is cyclic of order \(p^n\), for some \(n\in\mathbb{N}\). Since $\mathbb{Z}(p^{\infty})$ is infinite and its proper subgroups are finite, $\mathbb{Z}(p^{\infty})$ cannot be written as a direct sum of two proper subgroups. Since $\mathbb{Z}(p^{\infty})$ is not itself cyclic, it cannot be $\Sigma$-cyclic.

\section{Strong compactness}

In this section we will use an ultraproduct construction to prove Theorem~\ref{thm : comp Sigma cyclic}.
Recall that a \emph{filter}
$\mathcal{F}$ on a given set $I \neq \varnothing$ is a subset $\mathcal{F} \subseteq \mathcal{P}(I)$ such that:
\begin{enumerate-(1)-r}
    \item $I \in \mathcal{F}$ and $\varnothing \notin \mathcal{F}$;
    \item If $A,B \in \mathcal{F}$; then $A \cap B \in \mathcal{F}$;
    \item If $A, B \subseteq I$, $A \in \mathcal{F}$, and $B \supseteq A$, then $B \in \mathcal{F}$.
\end{enumerate-(1)-r}
If moreover,  for all $A \subseteq I$ either $A \in \mathcal{F}$ or $A^c \in \mathcal{F}$, we say that \(\mathcal{F}\) is an \emph{ultrafilter}.

If \(\kappa\) is a regular uncountable cardinal, and \(\mathcal{F}\) is a filter on \(I\), then \(\mathcal{F}\) is called \emph{\(\kappa\)-complete} if \(\mathcal{F}\) is closed under intersection of fewer than \(\kappa\) sets.

A cardinal $\kappa$ is \emph{$\omega_1$-strongly compact} if $\kappa$ is uncountable and for every set I, all $\kappa$-complete filters on $I$ extend to $\omega_1$-complete ultrafilters.

Suppose we have an infinite family of groups $\{G_i \mid i \in I\}$ and an ultrafilter $\mathcal{U}$ on $I$. Define an equivalence relation $\sim_\mathcal{U}$ on $\prod_{i \in I} G_i$ by $f \sim_\mathcal{U} g$ if and only if $\{i \in I \mid f(i) = g(i)\} \in \mathcal{U}$. Then, the \emph{ultraproduct} $\prod_{i \in I} G_i / \mathcal{U}$ is defined to be $\prod_{i \in I} G_i / {\sim_\mathcal{U}}$.
For every \(a\in \prod_{i \in I} G_i \), we denote by \(a_\mathcal{U}\) its equivalence class in \(\prod_{i \in I} G_i / \mathcal{U}\).
Moreover, we denote by \(0_\mathcal{U}\) the identity element in 
$\prod_{i \in I} G_i / \mathcal{U}$. 
We stress that for any \(a_\mathcal{U} \in \prod_{i \in I} G_i / \mathcal{U}\), we have
\[
a_\mathcal{U} = 0_\mathcal{U}\quad \iff \quad\{i\in I \mid a(i)= 0\text{ in \(G_i\)} \}\in\mathcal{U}.
\]

\begin{definition}
    We say that $B=\{b_i:i\in I\}$ is a set of \emph{independent cyclic generators} for the abelian group $A$ if $A = \sum_{i\in I} \langle b_i\rangle$ and for each $b_i \in B$, $\langle b_i \rangle \cap \sum_{j \neq i} \langle b_j \rangle = \{0\}$.
\end{definition}

If $B_i \subseteq G_i$ for every $i \in I$, then we denote by $\prod_{i \in I} B_i / \mathcal{U}$ the set \(\{b_\mathcal{U} \in \prod_{i \in I} G_i / \mathcal{U} : b(i) \in B_i\}\).

\begin{lemma}
    \label{lem:ultraproduct-cyclic-groups}
    Suppose we have a family of abelian groups $\{G_i : i \in I\}$ and an $\omega_1$-complete ultrafilter $\mathcal{U}$, and suppose $B_i$ is a set of independent cyclic generators for each $G_i$. Then, $\prod_{i \in I} B_i / \mathcal{U}$ is a set of independent cyclic generators for the ultraproduct \(\prod_{i \in I} G_i / \mathcal{U}\).
\end{lemma}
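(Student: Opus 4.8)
The plan is to verify directly the two defining properties of a set of independent cyclic generators for the ultraproduct, working fiberwise and then transferring the information across \(\mathcal{U}\). Throughout, I write \(b_\mathcal{U}\) for a typical element of \(\prod_{i\in I} B_i/\mathcal{U}\), so that \(b(i)\in B_i\) for every \(i\).

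First I would show that \(\prod_{i\in I} B_i/\mathcal{U}\) generates the ultraproduct. Given \(a_\mathcal{U}\in \prod_{i\in I} G_i/\mathcal{U}\), each fiber \(a(i)\) is a finite \(\mathbb{Z}\)-linear combination of distinct elements of \(B_i\), say with \(m_i\) nonzero terms and integer coefficients; both the count \(m_i\) and the coefficients depend on \(i\), and the crux is to make them uniform. Since the sets \(\{i : m_i = n\}\) for \(n\in\mathbb{N}\) partition \(I\) into countably many blocks, \(\omega_1\)-completeness forces exactly one block, say \(\{i : m_i = m\}\), to lie in \(\mathcal{U}\): otherwise every complement lies in \(\mathcal{U}\), and their countable intersection, which is empty, would belong to \(\mathcal{U}\), contradicting \(\varnothing\notin\mathcal{U}\). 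Applying the same countable-partition argument to each of the \(m\) integer coefficients and intersecting the finitely many resulting members of \(\mathcal{U}\), I obtain a set \(S\in\mathcal{U}\), functions \(b_1,\dotsc,b_m\in\prod_{i\in I} B_i\), and fixed integers \(n_1,\dotsc,n_m\) with \(a(i)=\sum_{k=1}^m n_k b_k(i)\) for all \(i\in S\). Hence \(a_\mathcal{U}=\sum_{k=1}^m n_k (b_k)_\mathcal{U}\) lies in the subgroup generated by \(\prod_{i\in I} B_i/\mathcal{U}\).

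Next I would verify independence. Suppose \(n\,b_\mathcal{U}=\sum_{k=1}^{t} m_k (c_k)_\mathcal{U}\), where the \((c_k)_\mathcal{U}\) are distinct elements of \(\prod_{i\in I} B_i/\mathcal{U}\), each distinct from \(b_\mathcal{U}\). Every inequality \(b_\mathcal{U}\neq (c_k)_\mathcal{U}\) and \((c_k)_\mathcal{U}\neq (c_{k'})_\mathcal{U}\) places a set where the corresponding fibers differ into \(\mathcal{U}\), and the displayed equation holds on a set in \(\mathcal{U}\); as only finitely many conditions are involved, a single finite intersection \(S\in\mathcal{U}\) witnesses all of them simultaneously, so no completeness is needed here. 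On \(S\) the elements \(b(i),c_1(i),\dotsc,c_t(i)\) are pairwise distinct members of \(B_i\), and fiberwise independence gives \(n\,b(i)\in \langle b(i)\rangle \cap \sum_{c\neq b(i)} \langle c\rangle = \{0\}\), where \(c\) ranges over \(B_i\). Thus \(n\,b(i)=0\) for all \(i\in S\), and since \(S\in\mathcal{U}\) this yields \(n\,b_\mathcal{U}=0_\mathcal{U}\), establishing \(\langle b_\mathcal{U}\rangle \cap \sum_{c_\mathcal{U}\neq b_\mathcal{U}} \langle c_\mathcal{U}\rangle = \{0_\mathcal{U}\}\).

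The main obstacle is the uniformization in the generation step: a priori both the number of summands and the coefficients needed to express \(a(i)\) vary with \(i\), and an arbitrary ultrafilter cannot collapse this dependence. It is precisely \(\omega_1\)-completeness — equivalently, the impossibility of a countable partition all of whose blocks avoid \(\mathcal{U}\) — that lets me pin down a single \(m\) and a single coefficient tuple on a member of \(\mathcal{U}\). The independence step, by contrast, is entirely finitary and would go through for any ultrafilter.
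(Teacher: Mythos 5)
Your proof is correct and follows essentially the same route as the paper's: the independence half is the identical finite-intersection argument transferring fiberwise independence across $\mathcal{U}$, and the generation half uses $\omega_1$-completeness in the same way (a countable family of sets covering $I$ must have a member in $\mathcal{U}$), merely organized as a partition by the number of summands and then by coefficients rather than by indexing directly over coefficient tuples. No gaps.
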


\begin{proof}
For the ease of exposition, let \(\mathcal{G} = \prod_{i \in I} G_i / \mathcal{U}\) and \(B = \prod_{i \in I} B_i / \mathcal{U}\).
    First, we will show that for any $x_{\mathcal{U}} \in B$, we have $\langle x_{\mathcal{U}} \rangle \cap \sum_{b\in B\setminus\{ x_\mathcal{U}\}} \langle b \rangle = \{0\}$. So, pick any $(x_0)_{\mathcal{U}} \in B$. Suppose that there are integers $k_0, k_1, \dotsc, k_n$ and distinct $(x_1)_{\mathcal{U}}, \dotsc, (x_n)_{\mathcal{U}} \in B \setminus \{(x_0)_{\mathcal{U}}\}$ such that $-k_0 (x_0)_U = k_1 (x_1)_{\mathcal{U}} + \cdots + k_n(x_n)_{\mathcal{U}}$. Then,
    \[
        \sum_{j=0}^{n} k_j (x_j)_{\mathcal{U}} = 0.
    \]
    This means that
    \[
        Y := \left\{ i \in I \mid \sum_{j=0}^{n} k_j x_j(i) = 0 \right\} \in \mathcal{U}.
    \]
    Consider
    \[
        D_{j,k} := \left\{ i \in I \mid x_j(i) \neq x_k(i) \right\}.
    \]
    Each $D_{j,k}$ is in $\mathcal{U}$ since $(x_j)_{\mathcal{U}} \neq (x_k)_{\mathcal{U}}$ for $j \neq k$. Now, the set
    \[
        W := Y \cap \bigcap_{0 \leq j < k \leq n} D_{j,k}
    \]
    belongs to the ultrafilter $\mathcal{U}$ because it is the finite intersection of sets in $\mathcal{U}$. Now, for any $i \in W$, we have $\sum_{j=0}^{n} k_j x_j(i) = 0$ and $x_j(i) \neq x_k(i)$ for $j \neq k$. Therefore, $-k_0 x_0(i) \in \langle B_i \setminus \{x_0(i)\} \rangle$. Since $x_0(i), \dotsc, x_n(i)$ are in $B_i$, and $B_i$ are a set of independent cyclic generators, it must be that $-k_0 x_0 (i) = 0$ for $i \in W$. Since $W \in \mathcal{U}$, it follows that $-k_0 (x_0)_{\mathcal{U}} = 0$. So, we proved that $\langle (x_0)_{\mathcal{U}} \rangle \cap \sum_{b\in B\setminus\{ x_\mathcal{U}\}} \langle b \rangle = \{0\}$.

    Next, we need to show that $\mathcal{G} = \sum_{b\in B} \langle b \rangle$. Pick any $y_\mathcal{U} \in \mathcal{G}$. For any $k_1, \dots, k_n \in \mathbb{Z}$, let 
    \[
        Y(k_1, \dots, k_n) = \left\{ i \in I \mid \exists x_0(i), \dots, x_n(i) \in B_i \quad   y(i) = \sum^n_{j =1} k_j x_j(i) \right\}
    \]

    Since each $B_i$ generates $G_i$,
    \[
    \bigcup_{n \in \omega \text{ and } k_1, \dotsc, k_n \in \mathbb{Z}} Y(k_1,\dotsc,k_n) = I
    \]
    Since there are countably many sets in this union and $\mathcal{U}$ is $\omega_1$-complete, there must be some \(n\in\omega\) and some \(n\)-tuple \(k_1,\dotsc,k_n\) such that $Y(k_1,\dotsc,k_n) \in \mathcal{U}$. For each $i\in Y(k_1,\dotsc,k_n)$, we define $x_1(i), \dotsc, x_n(i)\in B_i$ so that $y(i) = k_1 x_1(i) + \dotsb + k_n x_n(i)$. For $i \notin Y(k_1,\dots,k_n)$, we can set $x_1(i) = \cdots = x_n(i) = 0$. Then, $y_{\mathcal{U}} = \sum_{1 \leq j \leq n} k_i (x_i)_{\mathcal{U}}$. By construction, \((x_i)_\mathcal{U} \in B = \prod_{i \in I} B_i / \mathcal{U}\), so we proved that $B$ generates $\mathcal{G}$ as desired.
\end{proof}

With this lemma, we are able to prove the following compactness result.

\begin{proof}[Proof of Theorem~\ref{thm : comp Sigma cyclic}]
Suppose that \(\kappa\) is an \(\omega_1\)-strongly compact cardinal. Let \(A\) be a \(\kappa\)-\(\Sigma\)-cyclic group. As usual we denote by \(\mathcal{P}_\kappa(A)\) the set of subsets of \(A\) with cardinality \({<}\kappa\).
For any $Y \in \mathcal{P}_\kappa(A)$, let $U_Y = \{X \in \mathcal{P}_\kappa(A) \mid Y \subseteq X\}$. Then let
\[F_{\kappa}(A) = \{Z \subseteq \mathcal{P}_\kappa(A) \mid Z \supseteq U_Y \text{ for some } Y \in \mathcal{P}_{\kappa}(A) \}.\]
It is routine to check that $F_{\kappa}(A)$ is a $\kappa$-complete filter on $A$.
    
    For each $Y \in \mathcal{P}_\kappa(A)$, note that $\langle Y \rangle$ is a subgroup of $A$ with cardinality $<\kappa$, so $\langle Y \rangle$ is $\Sigma$-cyclic. Then, since $\kappa$ is $\omega_1$-strongly compact, $F_{\kappa}(A)$ is contained in an $\omega_1$-complete ultrafilter $\mathcal{U}$. Define the ultraproduct
    \[
    \mathcal{G} = \prod_{Y \in \mathcal{P}_\kappa(A)} \langle Y \rangle / \mathcal{U}
    \]
    By Lemma~\ref{lem:ultraproduct-cyclic-groups}, $\mathcal{G}$ is a $\Sigma$-cyclic group. So, it suffices to show that \(A\) embeds in \(\mathcal{G}\). We can recover a canonical embedding as follows. For any $a \in A$, define $\bar{a} \colon \mathcal{P}_\kappa(A) \to A$ by setting
    \[ \bar{a}(Y) = 
    \begin{cases}
        a & a \in Y. \\
        0 & a \notin Y.
    \end{cases}
    \]

Now, define $\varphi \colon A \to \mathcal{G}$ by $\varphi(a) = \bar{a}_\mathcal{U}$. We claim that $\varphi$ is an injective homomorphism.

For any $a,b \in A$, consider $U_{\{a,b,a+b\}}$, which is in $\mathcal{U}$. For any $Y \in U_{\{a,b,a+b\}}$, $\bar{a}(Y) + \bar{b}(Y)= a+b = \overline{a+b}(Y)$. Therefore, $\varphi(a)+\varphi(b) = \varphi(a+b)$ showing that \(\varphi\) is a group homomorphism. Moreover, for any nonzero $a \in A$, $U_{\{a\}}$ is in $\mathcal{U}$. For each $Y \in U_{\{a\}}$, $\bar{a}(Y) = a \neq 0$. Therefore, $(\bar{a})_{\mathcal{U}} \neq (\bar{0})_{\mathcal{U}}$, so $\varphi$ is injective.

Since $\varphi$ is an injective homomorphism from $A$ to $\mathcal{G}$, we obtain that $A$ must be isomorphic to a subgroup of $\mathcal{G}$. Since all subgroups of $\Sigma$-cyclic groups are $\Sigma$-cyclic by Kulikov's theorem (cf. Theorem~\ref{thm:Kulikov}), we conclude that $A$ is $\Sigma$-cyclic.
\end{proof}

\section{Weak compactness}

A cardinal \(\kappa\) is \emph{weakly compact} if it satisfies the partition property \(\kappa\to (\kappa)^2_2\).
There are a number of equivalent definitions for weakly compact cardinals, and a thorough introduction to weak compactness can be found in ~\cite[Chapter~9]{Jec}.
A crucial property of weakly compact cardinals is stationary reflection. We isolate a particular instance of stationary reflection below --- this is the only property of weakly compact cardinals that we use in our arguments.
\begin{lemma}
    \label{lem:reflection}
        Let \(\kappa\) be a weakly compact cardinal and let \(\{S_\alpha: \alpha<\kappa\}\) be a family of stationary subsets of \(\kappa\). Then there is a stationary \(T\subseteq \kappa \cap \Reg(\kappa)\) such that for all \(\lambda \in T\) and \(\alpha<\lambda\), the set
        \( S_\alpha \cap \lambda\) is stationary in \(\lambda\).
    \end{lemma}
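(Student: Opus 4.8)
The plan is to derive the lemma from the fact that a weakly compact cardinal is \(\Pi^1_1\)-indescribable (see \cite[Chapter~9]{Jec}): for every \(A\subseteq V_\kappa\) and every \(\Pi^1_1\) sentence \(\varphi\) in the language of set theory augmented by a predicate for \(A\), if \((V_\kappa,\in,A)\models\varphi\) then there is some \(\lambda<\kappa\) with \((V_\lambda,\in,A\cap V_\lambda)\models\varphi\). First I would code the family \(\langle S_\alpha:\alpha<\kappa\rangle\) as a single predicate on \(V_\kappa\). Using the Gödel pairing \(\Gamma\colon\kappa\times\kappa\to\kappa\), set \(R=\setm{\Gamma(\alpha,\beta)}{\alpha<\kappa,\ \beta\in S_\alpha}\subseteq\kappa\subseteq V_\kappa\), so that the relation \(\beta\in S_\alpha\) becomes the first-order condition \(\Gamma(\alpha,\beta)\in R\).

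The core of the argument is a single \(\Pi^1_1\) sentence \(\chi\) asserting three things about the ambient structure: that a distinguished club predicate \(D\) is unbounded in the ordinals; that the ordinals are regular (no function whose domain is a smaller ordinal has cofinal range); and that for every \(\alpha\) the coded set \(S_\alpha\) is stationary, i.e.\ \(\forall C\,(C\text{ club}\to C\cap S_\alpha\neq\varnothing)\). Both the regularity clause and the stationarity clause are universal over second-order objects (functions, clubs), and the unboundedness clause is first order, so after prenexing \(\chi\) is \(\Pi^1_1\). To prove that \(T=\setm{\lambda<\kappa}{\lambda\text{ is regular and }S_\alpha\cap\lambda\text{ is stationary in }\lambda\text{ for every }\alpha<\lambda}\) is stationary, I would fix an arbitrary club \(C\subseteq\kappa\), replace it by the club \(C^\ast=C\cap\setm{\lambda<\kappa}{\Gamma''(\lambda\times\lambda)=\lambda}\), and let \(D\) be the predicate for \(C^\ast\). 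Since \((V_\kappa,\in,R,D)\models\chi\), indescribability produces \(\lambda<\kappa\) with \((V_\lambda,\in,R\cap V_\lambda,D\cap V_\lambda)\models\chi\).

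It remains to read off the conclusion at \(\lambda\). Because the second-order quantifiers interpreted in \((V_\lambda,\in,\dots)\) range over all subsets of \(V_\lambda\supseteq\lambda\), and because club-ness, regularity, and (using that \(\lambda\) is closed under \(\Gamma\)) the computation \(R\cap V_\lambda\) codes exactly \(\langle S_\alpha\cap\lambda:\alpha<\lambda\rangle\) are all absolute, satisfaction of \(\chi\) in this structure genuinely says that \(\lambda\) is a regular cardinal, that \(C^\ast\cap\lambda\) is unbounded in \(\lambda\), and that \(S_\alpha\cap\lambda\) is stationary in \(\lambda\) for every \(\alpha<\lambda\). As \(C^\ast\) is closed and \(C^\ast\cap\lambda\) is unbounded in \(\lambda<\kappa\), we get \(\lambda\in C^\ast\subseteq C\); thus \(\lambda\in T\cap C\). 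Since \(C\) was arbitrary, \(T\subseteq\kappa\cap\Reg(\kappa)\) is stationary, which is exactly the claim.

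The main obstacle is bookkeeping rather than a new idea: one must verify carefully that \(\chi\) really is \(\Pi^1_1\), and, more delicately, that the coding and absoluteness facts hold at the reflection point — namely that \(R\cap V_\lambda\) codes precisely the initial segments \(S_\alpha\cap\lambda\) and that ``stationary'' and ``regular'' as evaluated inside \((V_\lambda,\in)\) coincide with their true meaning in \(V\); this is what forces the passage to \(\Gamma\)-closed \(\lambda\) and the use of the full power set in the second-order quantifiers. An alternative I would keep in reserve is to run the same reflection through the elementary-embedding characterization of weak compactness, using a transitive \(M\ni\langle S_\alpha:\alpha<\kappa\rangle\) with \({}^{<\kappa}M\subseteq M\) and an embedding \(j\colon M\to N\) with critical point \(\kappa\); there the analogous delicate point is checking, via \({}^{<\kappa}M\subseteq M\), that \(M\) computes stationarity of subsets of \(\kappa\) correctly.
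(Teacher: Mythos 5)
Your proposal is correct. The paper does not actually prove Lemma~\ref{lem:reflection}; it is stated as a known instance of stationary reflection at weakly compact cardinals, with only a pointer to \cite[Chapter~9]{Jec} for background. The argument you give --- coding the family $\langle S_\alpha:\alpha<\kappa\rangle$ into a single predicate via G\"odel pairing, expressing ``the ordinals are regular'' and ``each coded set is stationary'' as a single $\Pi^1_1$ sentence together with a first-order unboundedness clause for the club $C^\ast$, and then reflecting via $\Pi^1_1$-indescribability to a $\Gamma$-closed point $\lambda\in C^\ast$ --- is exactly the standard proof of simultaneous stationary reflection, and your treatment of the two delicate points (that the second-order quantifiers over $V_\lambda$ range over all clubs in $\lambda$, so stationarity is computed correctly, and that $R\cap V_\lambda$ codes precisely $\langle S_\alpha\cap\lambda:\alpha<\lambda\rangle$) is right. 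The only bookkeeping item you do not mention explicitly is arranging that the reflection point $\lambda$ is an uncountable limit ordinal (e.g.\ by adding the first-order clauses ``$\omega$ exists'' and ``every ordinal has a successor'' to $\chi$), so that ``regular'' yields a regular uncountable cardinal and stationarity in $\lambda$ is meaningful; this is routine and consistent with the caveats you already flag.
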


In order to analyze groups of uncountable size, we use filtrations.

\begin{definition}
    Let \(A\) be an infinite abelian group with \(|A| = \kappa\). A \emph{\(\kappa\)-filtration} for \(A\) is a sequence \(\{ A_\alpha : \alpha<\kappa\}\) of subgroups of \(A\) such that:
    \begin{enumerate-(i)}
        \item 
        \(|A_\alpha| <\kappa\) for all \(\alpha<\kappa\);
        \item 
         \(A_\alpha\subseteq A_\beta\) whenever \(\alpha\leq \beta\);
        \item 
        if \(\gamma\) is a limit ordinal, then \(A_\gamma = \bigcup_{\alpha<\gamma}A_\alpha\).
        \item 
        \(A= \bigcup_{\alpha<\kappa}A_\alpha\).
    \end{enumerate-(i)}
\end{definition}

We introduce a crucial property for subgroups introduced by Pr\"ufer \cite{Pru23}.

\begin{definition}
    A subgroup $G$ of a group $A$ is called \emph{pure} if every equation \[nx = g \qquad (n\in\mathbb{N} , g\in G)\] has a solution in \(G\) whenever it has a solution in $A$.
\end{definition}

It is immediate to show that pureness is closed under taking unions. (See, e.g., \cite[p.~51]{Fuc15}.)

\begin{fact}
    \label{fac : pure union}
    The union of a chain of pure subgroups of \(G\) is also a pure subgroup of \(G\).
\end{fact}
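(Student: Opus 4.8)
The plan is to reduce the purity of the union to the purity of a single link of the chain, exploiting the fact that the one element appearing on the right-hand side of a purity equation already lies in some member of the chain.

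Write the chain as $\{P_j : j \in J\}$, linearly ordered by inclusion, and set $P = \bigcup_{j \in J} P_j$. First I would check that $P$ is a subgroup of $G$: given $a, b \in P$, there are indices $j, k$ with $a \in P_j$ and $b \in P_k$, and since the chain is linearly ordered one of $P_j, P_k$ contains the other, say $P_j \subseteq P_k$; then $a, b \in P_k$, so $a - b \in P_k \subseteq P$. This is the only point at which the hypothesis that we have a \emph{chain}, rather than an arbitrary family of pure subgroups, is actually used.

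For purity, suppose $n \in \mathbb{N}$ and $g \in P$ are such that the equation $nx = g$ has a solution in $G$. Since $g \in P = \bigcup_{j \in J} P_j$, there is some $j \in J$ with $g \in P_j$. Because $P_j$ is pure in $G$ by hypothesis and $nx = g$ is solvable in $G$ with $g \in P_j$, the equation admits a solution $x \in P_j \subseteq P$. Hence every equation $nx = g$ with $g \in P$ that is solvable in $G$ is already solvable in $P$, which is exactly the assertion that $P$ is pure in $G$.

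I do not expect any genuine obstacle: the argument is essentially the finiteness observation that the single witness $g$ sits inside one link of the chain, where purity is already available, so no compatibility between different solutions across the chain needs to be arranged. The only subtlety worth flagging is the verification that $P$ is a subgroup, which really does require the linear-ordering hypothesis — for an arbitrary union of pure subgroups the underlying set need not be closed under addition, so the statement would fail without the chain condition.
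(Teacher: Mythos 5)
Your proof is correct and is exactly the standard argument the paper leaves to the reference (Fuchs): the equation $nx=g$ involves only the single element $g$, which lies in one link $P_j$ of the chain, and purity of that link finishes the job. Your remark that the chain hypothesis is needed only to guarantee that the union is a subgroup is also accurate.
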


Moreover, we recall the following classical result about pure subgroups.

\begin{theorem}[Szele]
    \label{thm:szele-pure}
    Every infinite subgroup \(H\) of a group $G$ can be embedded in a pure subgroup \(H_*\) of $G$ of the same cardinality.
\end{theorem}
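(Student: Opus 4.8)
The plan is to realize $H_*$ as a countable union of an increasing chain of subgroups of $G$, each of the same cardinality as $H$, arranged so that every divisibility constraint coming from an element already present gets resolved at the next stage. Write $\lambda = |H|$, and note $\lambda \geq \aleph_0$ since $H$ is infinite; this lower bound is what will absorb all the bookkeeping. I would not invoke Fact~\ref{fac : pure union} here, since the intermediate groups in the chain need not be pure and purity of the limit will be checked directly.

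Concretely, I would define a chain $H = H_0 \subseteq H_1 \subseteq H_2 \subseteq \cdots$ by recursion on $n<\omega$. Given $H_n$, for every pair $(m,h)$ with $m\in\mathbb{N}$ and $h\in H_n$ such that the equation $mx=h$ is solvable in $G$, use the axiom of choice to pick one solution $x_{m,h}\in G$. Collecting these into a set
\[
S_n = \{\, x_{m,h} : m\in\mathbb{N},\ h\in H_n,\ mx=h \text{ solvable in } G \,\},
\]
I then set $H_{n+1} = \langle H_n \cup S_n\rangle$, the subgroup of $G$ generated by $H_n$ together with the chosen solutions, and finally let $H_* = \bigcup_{n<\omega} H_n$.

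Two verifications remain. For cardinality, I argue by induction that $|H_n| = \lambda$: assuming $|H_n|=\lambda$, the number of equations considered is at most $|\mathbb{N}|\cdot|H_n| = \aleph_0\cdot\lambda = \lambda$, so $|S_n|\leq \lambda$; since the subgroup generated by at most $\lambda$ elements of an abelian group consists of finite $\mathbb{Z}$-linear combinations and hence has cardinality at most $\aleph_0\cdot\lambda = \lambda$, while it contains $H_n$, we get $|H_{n+1}|=\lambda$. Thus $|H_*| = \aleph_0\cdot\lambda = \lambda = |H|$, and clearly $H\subseteq H_*\subseteq G$. For purity, suppose $mx=g$ has a solution in $G$ with $g\in H_*$; since the chain is increasing, $g\in H_n$ for some $n$, and then by construction $S_n$ contains a solution $x_{m,g}\in H_{n+1}\subseteq H_*$. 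Hence every equation solvable in $G$ with right-hand side in $H_*$ is already solvable in $H_*$, so $H_*$ is pure.

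The only delicate point is the cardinality control, i.e.\ keeping each $H_n$ of size exactly $\lambda$ rather than letting it grow: this is precisely where the hypothesis that $H$ is infinite is used, since $\lambda\geq\aleph_0$ is what makes $\aleph_0\cdot\lambda = \lambda$ absorb both the countably many coefficients $m$ and the overhead of passing to the generated subgroup. For finite $H$ the same construction would in general produce a countably infinite group, so one genuinely cannot drop the infiniteness assumption.
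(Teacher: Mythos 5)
Your proof is correct. The paper itself does not prove this statement --- it explicitly defers the construction of the purification to the monograph of Jacoby and Loth --- so there is no internal argument to compare against; but your closure construction is the standard textbook proof of Szele's theorem and it is complete as written. The three key points all check out: the recursion adds, for each solvable equation $mx=h$ with $h\in H_n$, a witness into $H_{n+1}$, so any equation with right-hand side in $H_*=\bigcup_n H_n$ lands in some $H_n$ and acquires a solution one stage later; the cardinality bookkeeping $|S_n|\leq\aleph_0\cdot\lambda=\lambda$ and $|\langle H_n\cup S_n\rangle|=\lambda$ is exactly where the infiniteness of $H$ is used, as you note; and your remark that Fact~\ref{fac : pure union} is not needed is apt, since the intermediate $H_n$ are indeed not claimed to be pure and purity is verified directly at the limit. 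Note that your argument establishes purity in the single-equation sense of Pr\"ufer, which is precisely the definition adopted in this paper for abelian groups, so nothing further (e.g.\ closure under finite systems of equations, as one would need for general modules) is required.
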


We call \(H_*\) the \emph{purification} of \(H\). We do not discuss the construction of \(H_*\) in this paper --- we refer the reader to the monograph of Jacoby and Loth~\cite{JacLot} for full details.

 As a consequence of Fact~\ref{fac : pure union} and Szele's theorem, we have the following characterization of \(\Sigma\)-cyclic in terms of filtrations.

\begin{proposition}\label{prop:cyclic-filtration}
    Let \(\kappa\) be a regular cardinal and \(A\) be an abelian group with \(|A| = \kappa\).
     Then \(A\) is \(\kappa\)-\(\Sigma\)-cyclic if and only if \(A\) has a filtration consisting of \(\Sigma\)-cyclic  (pure)
     subgroups.
\end{proposition}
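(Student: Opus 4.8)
The plan is to prove both implications, with the forward direction carrying the real content. Throughout I will use that $\kappa$ is uncountable (the relevant case; for $\kappa=\omega$ the statement already fails, as witnessed by the Pr\"ufer group), so that a union of fewer than $\kappa$ subgroups each of size $<\kappa$ again has size $<\kappa$ by regularity, and so that the subgroups we produce are infinite and Szele's theorem applies.

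For the easier direction, suppose $A$ admits a filtration $\{A_\alpha:\alpha<\kappa\}$ of $\Sigma$-cyclic pure subgroups, and let $H\leq A$ with $|H|<\kappa$. For each $h\in H$ fix $\alpha_h<\kappa$ with $h\in A_{\alpha_h}$; since $|H|<\kappa$ and $\kappa$ is regular, $\beta:=\sup_{h\in H}\alpha_h<\kappa$, so $H\subseteq A_\beta$. As $A_\beta$ is $\Sigma$-cyclic, Kulikov's theorem (Theorem~\ref{thm:Kulikov}) gives that $H$ is $\Sigma$-cyclic. Hence every subgroup of size $<\kappa$ is $\Sigma$-cyclic, i.e.\ $A$ is $\kappa$-$\Sigma$-cyclic. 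Note that purity plays no role in this direction.

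For the forward direction, assume $A$ is $\kappa$-$\Sigma$-cyclic and fix an enumeration $A=\{a_\alpha:\alpha<\kappa\}$. I will construct the filtration by transfinite recursion, maintaining that each $A_\alpha$ is pure, $\Sigma$-cyclic, and of size $<\kappa$. At a successor step, given $A_\alpha$, set $H=\langle A_\alpha\cup\{a_\alpha\}\rangle$, so that $|H|<\kappa$ and $a_\alpha\in H$, and let $A_{\alpha+1}=H_*$ be the purification of $H$ supplied by Szele's theorem (Theorem~\ref{thm:szele-pure}). Then $A_{\alpha+1}$ is pure with $|A_{\alpha+1}|=|H|<\kappa$, so by $\kappa$-$\Sigma$-cyclicity it is $\Sigma$-cyclic, and $A_\alpha\subseteq H\subseteq A_{\alpha+1}$. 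At a limit step $\gamma$, set $A_\gamma=\bigcup_{\alpha<\gamma}A_\alpha$: regularity gives $|A_\gamma|<\kappa$, whence $A_\gamma$ is $\Sigma$-cyclic again by $\kappa$-$\Sigma$-cyclicity, and Fact~\ref{fac : pure union} ensures $A_\gamma$ is pure as the union of a chain of pure subgroups. Continuity at limits holds by definition, and since $a_\alpha\in A_{\alpha+1}$ for every $\alpha$ we obtain $A=\bigcup_{\alpha<\kappa}A_\alpha$.

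The main obstacle is organizing the successor step so that purity, $\Sigma$-cyclicity, and the cardinality bound all hold at once: here the two external results cooperate exactly as needed, since Szele's theorem delivers purity while preserving cardinality, and the hypothesis that $A$ is $\kappa$-$\Sigma$-cyclic converts the cardinality bound into $\Sigma$-cyclicity. The only genuine subtlety is that purifications need not nest compatibly, so one must \emph{not} purify at limit stages; instead limits are taken as bare unions, whose purity is precisely the content of Fact~\ref{fac : pure union}. A minor bookkeeping point is to ensure each $H$ above is infinite so that Szele's theorem applies, which is automatic once $A_0$ is chosen to be an infinite subgroup.
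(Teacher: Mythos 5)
Your proof is correct and follows essentially the same route as the paper's: the backward direction uses regularity to trap a small subgroup inside some $A_\gamma$ and then applies Kulikov's theorem, and the forward direction builds the filtration by transfinite recursion, purifying via Szele's theorem at successors and taking plain unions at limits, with purity there guaranteed by Fact~\ref{fac : pure union}. Your added remarks (that purity is irrelevant in the backward direction, that one must not purify at limit stages, and that $A_0$ should be arranged to be infinite so Szele's theorem applies) are accurate refinements of the same argument.
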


\begin{proof}
    \((\Rightarrow)\) Suppose $A$ is $\kappa$-$\Sigma$-cyclic. Let $\{a_i : i < \kappa\}$ be a generating set for $A$, so $A = \langle a_i : i < \alpha \rangle$.
    We inductively construct a filtration $\{ A_{\alpha} : i < \alpha\}$ consisting of pure subgroups of \(A\). Let \(A_0\)  be the countable group \(\langle a_0\rangle_*\). Then, let \(A_{\alpha+1} = \langle A_\alpha \cup \{a_{\alpha+1}\}\rangle_*\). For \(\lambda\) limit, we define \(A_\lambda = \bigcup_{\alpha<\lambda} A_\alpha\). 
     By Fact~\ref{fac : pure union}, the union of a chain of pure subgroups of \(A\) is also a pure subgroup of \(A\), so each set in the filtration is a pure subgroup of $A$.
    Since each $A_{\alpha}$ is $<\!\kappa$-generated, each $A_{\alpha}$ has cardinality $<\!\kappa$. Since $A$ is $\kappa$-$\Sigma$-cyclic, each $A_{\alpha}$ is $\Sigma$-cyclic. 

    \((\Leftarrow)\) Suppose $A = \cup_{\alpha < \kappa} A_{\alpha}$ is a filtration of $A$ and each $A_{\alpha}$ is $\Sigma$-cyclic. Suppose $G$ is a subgroup of $A$ with \(|G|=\lambda<\kappa\). Then, $G \subseteq A_{\gamma}$ for some $\gamma < \kappa$.
    To see this, consider an enumeration \(\{g_i:i<\lambda\}\) of \(G\).
    For each \(i<\lambda\), let \(\alpha_i = \min \{\alpha<\kappa: g_i \in A_{\alpha}\}\). Then set \(\gamma = \sup_{i<\lambda} \alpha_i\), which is smaller than \(\kappa\), because \(\kappa\) is regular. Since each $g_i \in G$ is in $A_{\gamma}$, $G$ is a subgroup of $A_{\gamma}$.
    So, $G$ is $\Sigma$-cyclic by Theorem~\ref{thm:Kulikov}.
\end{proof}

\begin{remark}
    Purity of the groups of the filtration \(\{A_\alpha:\alpha<\kappa\}\) will be a crucial property to characterize \(\Sigma\)-cyclic groups in terms of their \(\Gamma\)-invariant. Since any two filtrations agree on a club it is possible to argue that every filtration consists of pure subgroups on a club set.
\end{remark}

\subsection{The \(\Gamma\)-invariant for \(\Sigma\)-cyclic groups}

For \(X,Y\subseteq \kappa\), define
    \[
    X\sim Y \iff \text{there is a club \(C\subseteq \kappa\) such that \(X\cap C = Y\cap C\)}.
    \]
    It is routine to check that \(\sim\) is an equivalence relation on \(\mathcal{P}(\kappa)\).

    \begin{definition}
        Let \(A\) be an abelian group with \(|A|=\kappa\) and \(\{A_\alpha: \alpha<\kappa\}\) be a \(\kappa\)-filtration for \(A\). Define
        \[
        \Gamma(A) = \{ \alpha<\kappa : A/A_\alpha\text{ is not \(\kappa\)-\(\Sigma\)-cyclic}\} /{\sim}.
        \]
    \end{definition}

    \begin{remark}
        Note that     \[\Gamma(A) = \{ \alpha<\kappa :
    \{ \beta>\alpha : \beta<\kappa \text{ and } A_\beta/A_\alpha\text{ is not \(\Sigma\)-cyclic}\} \text{ is stationary in \(\kappa\)}
    \}/\sim.\]

    In fact, for any \(\alpha<\kappa\)
    the quotient group
    \(A/A_\alpha\) is not $\kappa$-$\Sigma$-cyclic precisely when
    the set \(\{\beta>\alpha :\beta<\kappa \text{ and } A_\beta/A_\alpha \text{ is not \(\Sigma\)-cyclic}\}\) is stationary in \(\kappa\).
    
    To see this, assume first that \(A/A_\alpha\) is not \(\kappa\)-\(\Sigma\)-cyclic.
    Then, Proposition~\ref{prop:cyclic-filtration} yields that \(A/A_\alpha\) does not have any filtration consisting of \(\Sigma\)-cyclic groups. On the other hand, note that \(\{A_\beta/A_\alpha: \beta>\alpha\}\) is a filtration for \(A/A_\alpha\). Since any two filtrations of a group must agree on a club, it follows that for every club \(C\subseteq \kappa\) there is some \(\beta\in C\cap (\alpha,\kappa)\) such that \(A_\beta/ A_\alpha\) is not \(\Sigma\)-cyclic. This shows that the set \(\{\beta>\alpha :\beta<\kappa \text{ and } A_\beta/A_\alpha \text{ is not \(\Sigma\)-cyclic}\}\) is stationary.

    If for \(\alpha<\kappa\) the set \(\{\beta>\alpha :\beta<\kappa \text{ and } A_\beta/A_\alpha \text{ is not \(\Sigma\)-cyclic}\}\) is stationary, then for every club \(C\subseteq \kappa\) there is some \(\gamma\in C\cap (\alpha,\kappa)\) such that \(A_\gamma/A_\alpha\) is not \(\Sigma\)-cyclic. It follows that \(A/A_\alpha\) does not admit a filtration consisting of \(\Sigma\)-cyclic, and thus \(A/A_\alpha\) is not \(\kappa\)-\(\Sigma\)-cyclic by Proposition~\ref{prop:cyclic-filtration}.
    \end{remark}

    Next, we discuss a characterization of \(\Sigma\)-cyclic groups in terms of set-theoretical properties of their \(\Gamma\)-invariant. In the proof, we iteratively apply the following fact about \(\Sigma\)-cyclic groups.

\begin{theorem}
    \label{thm:cyclic-quotient-summand}
    Let $A$ be an abelian group and $B$ a pure subgroup of $A$ such that $A/B$ is $\Sigma$-cyclic. Then, $B$ is a direct summand of $A$ and $A \cong B \oplus A/B$.
\end{theorem}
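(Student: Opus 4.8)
The plan is to exhibit a splitting of the surjection $\pi\colon A \to A/B$, i.e.\ to find a subgroup $C \leq A$ with $A = B \oplus C$. The natural strategy is to build a section $s\colon A/B \to A$ of $\pi$ and set $C = s(A/B)$; then $A = B \oplus C$ follows formally, since $\ker \pi = B$ guarantees $B \cap C = \{0\}$ and surjectivity of $\pi$ gives $A = B + C$. So the whole problem reduces to constructing such a section, and the isomorphism $A \cong B \oplus A/B$ is then immediate.

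To construct $s$, I would use the hypothesis that $A/B$ is $\Sigma$-cyclic. Write $A/B = \bigoplus_{i \in I} \langle \bar{c}_i \rangle$, where each $\bar{c}_i$ generates a cyclic summand, of either infinite order or order $n_i \in \mathbb{N}$. Because the summands are independent and a homomorphism out of a direct sum is determined by its values on the generators, it suffices to lift each $\bar{c}_i$ to some $c_i \in A$ with $\pi(c_i) = \bar{c}_i$ in a way that respects the order of $\bar{c}_i$: for an infinite-order generator any preimage works, while for a generator of finite order $n_i$ I need a preimage $c_i$ with $n_i c_i = 0$ in $A$, so that $\bar{c}_i \mapsto c_i$ extends to a well-defined homomorphism $\langle \bar{c}_i\rangle \to A$. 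Assembling these on the direct sum yields a homomorphism $s\colon A/B \to A$ with $\pi \circ s = \id$.

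The main obstacle, and the only place where purity is used, is producing the torsion lift: given $\bar{c}_i$ of order $n_i$, I must find $c_i \in A$ lifting it with $n_i c_i = 0$. Start with an arbitrary preimage $a \in A$ of $\bar{c}_i$. Since $\bar{c}_i$ has order $n_i$ in $A/B$, we have $n_i a \in B$. Now I invoke purity of $B$: the element $g := n_i a \in B$ satisfies the equation $n_i x = g$ with the solution $x = a$ in $A$, so by purity there is a solution $b \in B$ with $n_i b = g = n_i a$. Setting $c_i := a - b$ gives $n_i c_i = 0$, while $\pi(c_i) = \pi(a) - \pi(b) = \bar{c}_i - 0 = \bar{c}_i$ because $b \in B = \ker \pi$. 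This $c_i$ is the desired order-respecting lift.

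A small verification to carry out is that the map $s$ is well-defined on each summand, which amounts to checking that $n_i c_i = 0$ in the infinite-cyclic case is vacuous and in the finite case is exactly the relation just arranged, so no further compatibility between distinct $i$ is needed thanks to the directness of the sum. Once $s$ is in hand, I would record $C = \operatorname{im} s$, verify $B \cap C = \{0\}$ (if $b = s(\bar{a}) \in B$ then $\bar{a} = \pi(s(\bar{a})) = \pi(b) = 0$, so $s(\bar a) = 0$) and $B + C = A$ (for $a \in A$, $a - s(\pi(a)) \in \ker\pi = B$), and conclude $A = B \oplus C$ with $C \cong A/B$ via $\pi|_C$. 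I expect the argument to be short; the purity hypothesis is doing precisely the work of clearing the obstruction to lifting torsion, which is exactly where a naive splitting of $\pi$ would fail.
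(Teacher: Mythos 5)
Your proof is correct; the paper itself gives no argument for this theorem, only a citation to Kaplansky, and your construction --- lifting each cyclic generator of $A/B$ to an element of $A$ of the same order, using purity of $B$ to fix the torsion lifts via the equation $n_i x = n_i a$, and assembling a section $s$ by the universal property of the direct sum --- is precisely the standard proof found in that reference. There are no gaps.
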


\begin{proof}
    See \cite[p. 15]{Kap54}.
\end{proof}

    \begin{theorem}
        \label{thm:Gamma invariant groups}
        Let \(A\) be \(\kappa\)-\(\Sigma\)-cyclic with \(|A|=\kappa\). Then \(A\) is \(\Sigma\)-cyclic if and only\footnote{We denote by \(0\) the equivalence class \([\emptyset]_\sim\).} if \(\Gamma(A) = 0\).
    \end{theorem}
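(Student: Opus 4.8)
The plan is to prove the two directions separately, with the forward implication being essentially immediate and the reverse implication carrying the real content through an iterated splitting argument based on Theorem~\ref{thm:cyclic-quotient-summand}. Throughout I would use that \(\Gamma(A)\) does not depend on the choice of \(\kappa\)-filtration, which is exactly what the preceding remarks establish (any two filtrations agree on a club).

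For the direction \((\Rightarrow)\), suppose \(A\) is \(\Sigma\)-cyclic and write \(A = \bigoplus_{i<\kappa}\langle c_i\rangle\); the index set has size \(\kappa\) because each cyclic summand is countable and \(|A|=\kappa\). I would then compute \(\Gamma(A)\) using the transparent filtration \(A_\alpha = \bigoplus_{i<\alpha}\langle c_i\rangle\). For every \(\alpha<\kappa\) the quotient \(A/A_\alpha \cong \bigoplus_{\alpha\le i<\kappa}\langle c_i\rangle\) is again \(\Sigma\)-cyclic, hence \(\kappa\)-\(\Sigma\)-cyclic, so the set defining \(\Gamma(A)\) is literally empty and \(\Gamma(A)=0\).

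For the direction \((\Leftarrow)\), suppose \(\Gamma(A)=0\). First I would fix, via Proposition~\ref{prop:cyclic-filtration}, a \(\kappa\)-filtration \(\{A_\alpha:\alpha<\kappa\}\) consisting of pure \(\Sigma\)-cyclic subgroups. Since \(\Gamma(A)=0\) is filtration-independent, the set \(\{\alpha : A/A_\alpha \text{ is not } \kappa\text{-}\Sigma\text{-cyclic}\}\) is non-stationary for this filtration, so I would choose a club \(C\) disjoint from it and relabel the filtration along \(C\) to obtain a continuous filtration \(\{B_\xi:\xi<\kappa\}\) of pure \(\Sigma\)-cyclic subgroups with the additional property that \(A/B_\xi\) is \(\kappa\)-\(\Sigma\)-cyclic for every \(\xi\) (continuity is inherited because the original filtration is continuous at limits and \(C\) is closed). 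The payoff of this arrangement is that \(B_{\xi+1}/B_\xi\) is a subgroup of the \(\kappa\)-\(\Sigma\)-cyclic group \(A/B_\xi\) of cardinality \({<}\kappa\), hence \(\Sigma\)-cyclic. Observing that \(B_\xi\) is pure in \(B_{\xi+1}\) (purity in \(A\) passes to any intermediate subgroup, since a solution in \(B_{\xi+1}\subseteq A\) is a solution in \(A\), which then lies in \(B_\xi\)), Theorem~\ref{thm:cyclic-quotient-summand} yields a splitting \(B_{\xi+1} = B_\xi \oplus C_\xi\) with each \(C_\xi \cong B_{\xi+1}/B_\xi\) being \(\Sigma\)-cyclic.

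It then remains to assemble the successive complements into a global decomposition. I would prove by induction on \(\eta\) that \(B_\eta = B_0 \oplus \bigoplus_{\xi<\eta} C_\xi\) as an internal direct sum: the successor step is the splitting above (using \(C_\xi\cap B_\xi=0\) to see the new summand is independent of all earlier ones), and the limit step follows because an increasing union of internal direct sums is again an internal direct sum. Letting \(\eta\to\kappa\) gives \(A = B_0 \oplus \bigoplus_{\xi<\kappa} C_\xi\), a direct sum of \(\Sigma\)-cyclic groups, so \(A\) is \(\Sigma\)-cyclic. The main obstacle I anticipate is bookkeeping rather than conceptual: one must coordinate the several clubs (continuity, purity, and \(\kappa\)-\(\Sigma\)-cyclicity of the quotients) into a single well-behaved filtration, verify that purity is correctly inherited at each successor so that Theorem~\ref{thm:cyclic-quotient-summand} applies, and confirm that the chosen complements \(C_\xi\) remain genuinely independent through the limit stages.
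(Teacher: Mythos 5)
Your proposal is correct and follows essentially the same route as the paper: the forward direction via the transparent filtration \(A_\alpha=\bigoplus_{i<\alpha}\langle c_i\rangle\), and the reverse direction by re-indexing a pure \(\Sigma\)-cyclic filtration along a club avoiding the bad set, splitting each successor step with Theorem~\ref{thm:cyclic-quotient-summand}, and assembling by induction with continuity at limits. Your phrasing of the induction in terms of internal direct sums of the complements \(C_\xi\) is a slightly more careful rendering of the same argument, but not a different one.
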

    \begin{proof}
        \((\Rightarrow)\) Let \( A = \bigoplus_{i<\kappa}\langle c_i\rangle\). Define a filtration for \(A\) by setting \(A_\alpha =\bigoplus_{i<\alpha}\langle c_i\rangle\) with the usual stipulation that \(A_0 = \{0\}\). Then, we have
        \[
        \{ \alpha<\kappa : A/A_\alpha \text{ is not \(\kappa\)-\(\Sigma\)-cyclic}\} = \emptyset
        \]
        because \(A/A_\alpha\) is isomorphic to the $\Sigma$-cyclic group $\bigoplus_{\alpha \leq i<\kappa}\langle c_i\rangle$.

        \((\Leftarrow)\) Suppose that \(\{A_\alpha: \alpha<\kappa\}\) is a filtration for \(A\) consisting of $\Sigma$-cyclic pure subgroups of $A$. Assume that \(\Gamma(A) = 0\). Thus,
        \[ E = \{\alpha<\kappa : A/A_\alpha \text{ is not \(\kappa\)-\(\Sigma\)-cyclic }\}\sim \emptyset,\]
        so there is a club \(C\subseteq \kappa\) such that \(C\cap E = C\cap \emptyset = \emptyset\). Let \(f\colon \kappa \to C \) be a continuous increasing function which enumerates \(C\). Then, for \(\alpha<\kappa\), define \(B_\alpha = A_{f(\alpha)}\). Note that \(\{ B_\alpha : \alpha<\kappa\}\) is also a filtration for \(A\) consisting of \(\Sigma\)-cyclic groups. Moreover, without loss of generality, we can assume that \(B_0 = \{0\}\). For all \(\alpha<\kappa\), note that
        \[
        B_{\alpha+1}/B_\alpha \subseteq A/B_\alpha = A/A_{f(\alpha)}.
        \]
        Since \(f(\alpha) \in C\), it follows that \(A/A_{f(\alpha)}\) is \(\kappa\)-\(\Sigma\)-cyclic, and therefore \(B_{\alpha+1}/B_\alpha\) is \(\Sigma\)-cyclic.
        We will prove that \(A =  \bigoplus_{\alpha<\kappa} B_{\alpha+1}/B_\alpha\) by induction, showing that \(A\) is $\Sigma$-cyclic.
        Suppose $B_{\beta} \cong \bigoplus_{\alpha<\beta} B_{\alpha+1}/B_\alpha$. By our previous observation, $B_{\beta+1}/B_{\beta}$ is $\Sigma$-cyclic. Moreover, since \(B_\alpha\) is pure in \(A\), it is clear that \(B_\alpha\) is pure in \(B_{\alpha+1}\). So, by Theorem~\ref{thm:cyclic-quotient-summand},
        \[
        B_{\beta+1} \cong B_{\beta}\oplus B_{\beta+1}/B_{\beta}
        \cong
        \bigoplus_{\alpha<\beta} B_{\alpha+1}/B_\alpha
        \oplus
        B_{\beta+1}/B_{\beta}  
        \cong
        \bigoplus_{\alpha<\beta+1} B_{\alpha+1}/B_\alpha.
        \]
        
        Now, suppose $\beta < \kappa$ is a limit cardinal and for all $\gamma < \beta$, suppose that $B_{\gamma} \cong\bigoplus_{\alpha<\gamma} B_{\alpha+1}/B_\alpha$. Since $\{B_{\alpha}:\alpha <\kappa\}$ is a filtration, we have continuity at limit levels. Therefore, 
        \[             
            B_{\beta} =  \bigcup_{\alpha < \beta} B_{\alpha} \cong \bigcup_{\gamma < \beta} \bigoplus_{\alpha<\gamma} B_{\alpha+1}/B_\alpha \cong  \bigoplus_{\alpha < \beta} B_{\alpha+1}/B_\alpha.
        \] 

In conclusion, we have \(A = \bigcup_{\alpha<\kappa} B_\alpha = \bigoplus_{\alpha < \kappa}B_{\alpha+1}/B_\alpha\) as desired.
\end{proof}

\begin{theorem}
        \label{thm:compactness sigma cyclic}
        Let \(\kappa\) be weakly compact. If \(A\) is an abelian group with \(|A| = \kappa\) and \(A\) is \(\kappa\)-\(\Sigma\)-cyclic, then \(A\) is \(\Sigma\)-cyclic.
    \end{theorem}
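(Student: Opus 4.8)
\emph{The plan} is to argue by contradiction, combining the \(\Gamma\)-invariant characterization of Theorem~\ref{thm:Gamma invariant groups} with the stationary reflection supplied by Lemma~\ref{lem:reflection}. First I would fix, via Proposition~\ref{prop:cyclic-filtration}, a \(\kappa\)-filtration \(\{A_\alpha : \alpha<\kappa\}\) of \(A\) by pure \(\Sigma\)-cyclic subgroups; building it one generator at a time and purifying, as in the proof of that proposition, I can arrange \(|A_\alpha|\leq|\alpha|+\aleph_0\), so that \(\{A_\alpha:\alpha<\lambda\}\) is a genuine \(\lambda\)-filtration of \(A_\lambda\) whenever \(\lambda<\kappa\) is regular uncountable. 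Suppose toward a contradiction that \(A\) is not \(\Sigma\)-cyclic. Then Theorem~\ref{thm:Gamma invariant groups} gives \(\Gamma(A)\neq 0\), so
\[
E = \{\alpha<\kappa : A/A_\alpha \text{ is not } \kappa\text{-}\Sigma\text{-cyclic}\}
\]
is stationary in \(\kappa\). By the Remark following the definition of \(\Gamma\), for each \(\alpha\in E\) the set
\[
S_\alpha = \{\beta\in(\alpha,\kappa) : A_\beta/A_\alpha \text{ is not } \Sigma\text{-cyclic}\}
\]
is stationary in \(\kappa\).

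Next I would apply Lemma~\ref{lem:reflection} to the \(\kappa\)-indexed family \(\{R_\alpha:\alpha<\kappa\}\) given by \(R_\alpha=S_\alpha\) for \(\alpha\in E\) and \(R_\alpha=E\) otherwise; every \(R_\alpha\) is stationary. This yields a stationary \(T\subseteq\kappa\cap\Reg(\kappa)\) such that for each \(\lambda\in T\) the trace \(E\cap\lambda\) is stationary in \(\lambda\) (either because \(E\cap\lambda=\lambda\), or because some index below \(\lambda\) carries \(E\) and reflects), and for every \(\alpha\in E\cap\lambda\) the trace \(S_\alpha\cap\lambda\) is stationary in \(\lambda\). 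Intersecting \(T\) with the club of those \(\lambda\) for which \(A_\lambda=\bigcup_{\alpha<\lambda}A_\alpha\), \(|A_\lambda|=\lambda\), and \(\{A_\alpha:\alpha<\lambda\}\) is a \(\lambda\)-filtration, I fix an uncountable regular \(\lambda\) in the resulting stationary set.

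The heart of the argument is then to compute \(\Gamma(A_\lambda)\). Since each \(A_\beta\) is pure in \(A\), it is pure in \(A_\lambda\), so \(\{A_\beta:\beta<\lambda\}\) is a filtration of \(A_\lambda\) by pure \(\Sigma\)-cyclic subgroups, and the Remark applies verbatim at level \(\lambda\): for \(\alpha<\lambda\), the quotient \(A_\lambda/A_\alpha\) fails to be \(\lambda\)-\(\Sigma\)-cyclic exactly when \(\{\beta\in(\alpha,\lambda):A_\beta/A_\alpha\text{ not }\Sigma\text{-cyclic}\}=S_\alpha\cap\lambda\) is stationary in \(\lambda\). For \(\alpha\in E\cap\lambda\) this holds by the reflection of \(S_\alpha\), so \(E\cap\lambda\) is contained in the set defining \(\Gamma(A_\lambda)\); as \(E\cap\lambda\) is stationary in \(\lambda\), I conclude \(\Gamma(A_\lambda)\neq 0\). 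Now \(A_\lambda\) has size \(\lambda\) and is \(\lambda\)-\(\Sigma\)-cyclic, being a subgroup of size \({<}\kappa\) of the \(\kappa\)-\(\Sigma\)-cyclic group \(A\), so Theorem~\ref{thm:Gamma invariant groups} applies and forces \(A_\lambda\) to be not \(\Sigma\)-cyclic. But a subgroup of \(A\) of cardinality \({<}\kappa\) is \(\Sigma\)-cyclic by hypothesis, a contradiction. Hence \(\Gamma(A)=0\) and \(A\) is \(\Sigma\)-cyclic.

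The step I expect to be the main obstacle is the faithful transfer of the \(\Gamma\)-invariant computation from \(A\) to \(A_\lambda\): one must verify that the bad set \(S_\alpha\cap\lambda\) governing non-\(\Sigma\)-cyclicity of \(A_\lambda/A_\alpha\) is literally the trace of the set governing non-\(\Sigma\)-cyclicity of \(A/A_\alpha\), and this rests precisely on purity descending to \(A_\lambda\) and on the cardinality bookkeeping that makes \(\{A_\alpha:\alpha<\lambda\}\) a \(\lambda\)-filtration. This is the subtler analysis about purity and direct sums flagged in the introduction; the remainder is the standard reflection-of-the-\(\Gamma\)-invariant template.
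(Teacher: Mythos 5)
Your proposal is correct and follows essentially the same route as the paper: a filtration by pure \(\Sigma\)-cyclic subgroups, the \(\Gamma\)-invariant characterization (Theorem~\ref{thm:Gamma invariant groups}), reflection of the family \(\{S_\alpha\}\) (padded with \(E\) off the bad set) via Lemma~\ref{lem:reflection}, and the contradiction that \(\Gamma(A_\lambda)\neq 0\) forces the small subgroup \(A_\lambda\) to be non-\(\Sigma\)-cyclic. The only cosmetic difference is that the paper justifies the stationarity of \(E\cap\lambda\) by observing \(0\notin E\) (so \(S_0=E\) reflects), whereas you use a slightly more general dichotomy; both work.
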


    \begin{proof}[Proof of Theorem~\ref{thm : weak comp Sigma cyclic}]
    Let \(\kappa\) be weakly compact. Assume that \(A\) is an abelian group with \(|A| = \kappa\) and that \(A\) is \(\kappa\)-\(\Sigma\)-cyclic.

    Suppose \(A\) is not \(\Sigma\)-cyclic towards contradiction.
        Let \(\{A_\alpha: \alpha<\kappa\}\) be a filtration for \(A\) consisting of \(\Sigma\)-cyclic groups.
        Without loss of generality, assume that \(A_0 = \{0\}\) and \(|A_\alpha|\leq{|\alpha|+\aleph_0}\). Then let
        \[
        E = \{\alpha<\kappa :
        \{ \beta>\alpha : \beta<\kappa \text{ and } A_\beta/A_\alpha\text{ is not \(\Sigma\)-cyclic}\}
        \text{ is stationary in \(\kappa\)}.
        \}
        \]
        Since \(A\) is not \(\Sigma\)-cyclic, \(E\) is stationary by Theorem~\ref{thm:Gamma invariant groups}. Now we build a family \(\{S_\alpha\}\) of stationary subsets of \(\kappa\)  by setting
        \[
        S_\alpha =
        \begin{cases}
            \{\beta>\alpha : \beta<\kappa \text{ and } A_\beta/A_\alpha \text{ is not \(\Sigma\)-cyclic}\} &\alpha \in E.\\
            E &\text{otherwise}.
        \end{cases}
        \]
        Let \(T\subseteq \kappa \cap \Reg(\kappa)\) be a stationary set such that for all \(\lambda\in T\) and \(\alpha<\lambda\) the set \(S_\alpha\cap \lambda\) is stationary in \(\lambda\).

        Now pick any \(\lambda \in T\). First note that \(E\cap \lambda\) is stationary in \(\lambda\). This is because 
        \[\{\beta>0 : A_\beta / A_0\text{ is not \(\Sigma\)-cyclic}\} = \emptyset\] is clearly not stationary. Therefore, \(0 \notin E\) and consequently
        \(S_0\cap \lambda = E\cap \lambda\), which is stationary by the assumption on \(T\). Also, note that \(\{A_\alpha: \alpha<\lambda\}\) is a filtration for the \(\Sigma\)-cyclic group \(A_\lambda\).

        \begin{claim}For any \(\lambda \in T\), we have
            \[E\cap \lambda \subseteq \{ \alpha<\lambda : \{ \beta>\alpha: \beta<\lambda \text{ and \(A_\beta/A_\alpha\) is not \(\Sigma\)-cyclic}\} \text{ is stationary in } \lambda \}\] and both sets are stationary in $\lambda$.
        \end{claim}

        \begin{proof}
            For \(\alpha \in E\cap \lambda\), we know that \(S_\alpha\) reflects at \(\lambda\) by Lemma~\ref{lem:reflection} and the choice of \(T\). This means that the set
            \[S_\alpha \cap \lambda  = \{\beta > \alpha : \beta <\lambda \text{ and } A_\beta/A_\alpha \text{ is not \(\Sigma\)-cyclic}\}
            \]
            is stationary in \(\lambda\). Since \(E\cap \lambda\) is stationary, so is the larger set.
        \end{proof}

         Given the claim, \(\Gamma(A_\lambda)\neq 0\). This shows that \(A_\lambda\) is not \(\Sigma\)-cyclic due to Theorem~\ref{thm:Gamma invariant groups}, and this is a contradiction.
    \end{proof}

\bibliographystyle{alpha}

\end{document}